\makeatletter \hypersetup{pdftitle={\@title}}}
 \gdef\xxxmark{%
   \expandafter\ifx\csname @mpargs\endcsname\relax % in minipage?
     \expandafter\ifx\csname @captype\endcsname\relax % in figure/caption?
       \marginpar{xxx}% not in a caption or minipage, can use marginpar
     \else
       xxx % notice trailing space
     \fi
   \else
     xxx % notice trailing space
   \fi}
 \gdef\xxx{\@ifnextchar[\xxx@lab\xxx@nolab}
 \long\gdef\xxx@lab[#1]#2{{\bf [\xxxmark #2 ---{\sc #1}]}}
 \long\gdef\xxx@nolab#1{{\bf [\xxxmark #1]}}
\let\realbfseries=\bfseries
\def\bfseries{\realbfseries\boldmath}
\newif\ifabstract
\newif\iffull
\let\epsilon=\varepsilon
\newsavebox{\theorembox}
\newsavebox{\lemmabox}
\newsavebox{\remarkbox}
\newsavebox{\corollarybox}
\newsavebox{\propositionbox}
\newsavebox{\examplebox}
\newsavebox{\conjecturebox}
\newsavebox{\algbox}
\newsavebox{\qbox}
\newsavebox{\problembox}
\newsavebox{\definitionbox}
\newsavebox{\assumptionbox}
\newsavebox{\hypothesisbox}
\savebox{\theorembox}{\noindent\bf Theorem}
\savebox{\lemmabox}{\noindent\bf Lemma}
\savebox{\remarkbox}{\noindent\bf Remark}
\savebox{\corollarybox}{\noindent\bf Corollary}
\savebox{\propositionbox}{\noindent\bf Proposition}
\savebox{\examplebox}{\noindent\bf Example}
\savebox{\conjecturebox}{\noindent\bf Conjecture}
\savebox{\algbox}{\noindent\bf Algorithm}
\savebox{\qbox}{\noindent\bf Question}
\savebox{\definitionbox}{\noindent\bf Definition}
\savebox{\problembox}{\noindent\bf Problem}
\savebox{\assumptionbox}{\noindent\bf Assumption}
\savebox{\hypothesisbox}{\noindent\bf Hypothesis}
\newtheorem{theorem}{\usebox{\theorembox}}
\newtheorem{corollary}[theorem]{\usebox{\corollarybox}}
\newcommand{\qed}{\;\;\;\Box}
\newenvironment{proof}{\par{\bf Proof:}}{\(\qed\) \par}
\begin{document}

\title{On Describing the Routing Capacity Regions of Networks}

\author{{Ali Kakhbod}\footnote{Electrical Engineering and Computer Science, University of Michigan, Ann Arbor, MI 48109. Email: {\texttt{akakhbod@umich.edu}}},
 S.~M.~Sadegh~Tabatabaei~Yazdi\footnote{Department of Electrical and Computer Engineering, Texas A\&M University, College Station, TX 77840. Email: \texttt{sadegh@neo.tamu.edu}}}

\maketitle

\begin{abstract}
The routing capacity region of networks with multiple
unicast sessions can be characterized using Farkas' lemma
as an infinite set of linear inequalities. In this paper this result
is sharpened by exploiting properties of the solution satisfied
by each rate-tuple on the boundary of the capacity region,
and a finite description of the routing capacity region which
depends on network parameters is offered. For the special case
of undirected ring networks additional results on the complexity
of the description are provided.

\textit{Index Terms}--— routing, network capacity, multicast sessions,
linear programming.
\end{abstract}

%\setcounter{page}{0}
%\thispagestyle{empty}
%\clearpage

\section{Introduction}
Routing protocols  underlie the traditional strategies for
 communicating information in data networks.
The newer paradigm of {\em network coding} (see, e.g., 
\citet{Ahlswede00}) 
offers  potentially more reliable coding schemes with 
higher throughput and error correcting capabilities, 
but it is costlier to
implement (see e.g., \citet{Langberg}).
It is important to better understand routing because of its significance
to the most practical networks. Furthermore, 
routing capacity regions provide inner 
bounds for the corresponding network coding capacity regions, 
and there are cases where the two capacity regions for the 
same networking problem are identical (e.g.,  \citet{KramerSavari06}, \citet{isit07}, \citet{chinese}).

We here focus on the routing capacity regions of a general class of networks supporting multiple multicast sessions. 
Much of the routing literature focuses on the 
{\em multicommodity flow} problem in which every message in the network is 
transmitted from a source to a unique destination (\citet{demos1},\citet{demos2}).
The famous {\em max-flow min-cut} theorem provides bounds on the rates 
of the different messages being simultaneously transmitted between the 
different source-destination pairs.  \citet[Part VII]{Schrijver03} 
surveys many of the cases where this bound is tight.  The paper \citet{hu} 
is an early reference which provides an example where the bound is not tight.

The papers \citet{iri} and \citet{onaga}  establish a result 
sometimes called the ``Japanese theorem'', a special case of Farkas'
lemma, which provides necessary and  sufficient conditions for determining if an arbitrary set of  rates is feasible in a network.
One shortcoming of this result is that the description of the routing
capacity region for the multicommodity flow problem
involves the intersection of an infinite set of inequalities.
Another shortcoming of this result is that it only considers unicast sessions.
While the assumption of a unique destination is natural for many application
areas of network optimization, for communication problems we want to allow
for the possibility of messages from a single transmitter to multiple
receivers. Using standard terminology from communications, we further refer 
to {\em unicast} or {\em multicast} messages to indicate if the set of 
destinations is a single terminal or a set of multiple terminals.  We will 
use the terms unicast and multicommodity flow interchangeably.
%In the network coding literature, \cite{zeger} considers a routing problem 
%similar to the maximum concurrent flow problem \cite{sm} for directed, acyclic
%graphs; that paper introduces a notion of a scalar routing capacity for a 
%network and specifies a linear program to find it.\\

Just as one can form a system of linear inequalities to describe 
a multicommodity flow problem, one can likewise study the general
multiple multicast problem where every terminal in the network potentially
has messages for every non-empty subset of the other accessible terminals.
For a multicommodity flow or unicast session the flow for a session 
which enters an intermediate vertex along the path is identical to the flow 
for that session emanating from that vertex.  The natural generalization for 
multicast sessions constrains each spanning subtree carrying flow to have
all of the edges of that subtree transmit the same flow.  The set of flows along the 
various paths and subtrees are jointly constrained by the capacities of the 
edges or nodes in the graph, and the corresponding fractional routing capacity region
can in principle be determined by Fourier-Motzkin elimination 
\citet{Fourier-Motzkin}. However, as the results of Fourier-Motzkin elimination are specific to the set 
of constraints for a particular networking problem, our objective is to offer a
characterization which will apply to many networking problems.

The papers \citet{chinese} and  \citet{isit07} extend the Japanese theorem to the networks with multiple multicast sessions.
Roughly, they characterize the polytope of all achievable rates of a network with capacitated links in terms of  an infinite  set of inequalities, each corresponding to  a distance function that assigns integer distances
to the edges of the network. This characterization is valid for the multicast sessions, where the messages can have more than one destination. We will discuss this result and its proof in Section 2, and describe an
inequality elimination technique to help study the network coding capacity region 
of special cases of the multiple multicast problem on 
an undirected ring network. The technique
determines the minimal necessary and sufficient set of inequalities
among the infinite set of inequalities specified by
the Japanese theorem and is a consequence of properties of
the routing solution for any rate-tuple on the boundary of the
routing capacity region; this technique appears to be new even for the special case of multicommodity flow problems. We
use it to further characterize the minimal set of inequalities
for general directed or undirected networks and for undirected
ring networks.

Our focus in this paper is on the \textit{size} of the coefficients of
the inequalities that appear in the minimal description of the
routing rate region of an undirected network.  We combine the
inequality elimination technique with complexity results (see,
e.g., \citet{Fourier-Motzkin}, \citet{gls}) on the description of a rational system of linear
inequalities to bound the coefficients of the linear inequalities
that describe the routing rate region. The obtained bounds are exponential in the  number of edges of network. We further discuss an
average case analysis of the size of linear inequalities for
undirected ring networks, and by applying a probabilistic technique we suggest that for the characterization of routing capacity region we truly need to take into account the inequalities with the coefficients that grow polynomially in  the  number of edges of network.    

The outline of the paper is as
follows. In Section 2 we formulate  and extend the Japanese theorem and
describe an inequality elimination technique that was recently introduced in \citet{chinese} and \citet{isit07} to help study the
routing capacity region. In Section 3 we present
our results on the complexity of routing capacity regions of
networks.

%%%%%%%%%%%%%%%%%%%%%%%%%%%%%%%%%%%%%%%%%%%
%%%%%%%%%%%%%%%%%%%%%%%%%%%%%%%%%%%%%%%%%%%
%%%%%%%%%%%%%%%%%%%%%%%%%%%%%%%%%%%%%%%%%
\section{The Routing Capacity Region of General Networks}
\subsection{Preliminaries}
 Consider a network that is represented by a graph $G(V,E)$, where
 $V$ and $E$ respectively denote the set of vertices and edges in the 
network graph.  The edges are either all undirected, meaning that the sum
of flow along both directions of an edge is bounded by the capacity of the
edge, or all directed.
 Furthermore, for any subgraph $S$ of the network let $V(S)$ and $E(S)$
respectively denote its set of vertices and edges. In a general communication 
setting, every vertex $v \in V$ can simultaneously send messages to arbitrary
 nonempty subsets of accessible vertices in $V \setminus \{v\}$. 
Every message $M$ with source node 
$v_s$ and set of destination nodes $\{ v_1, \ \cdots, \ v_k \}$, is associated
 with a rate $R_M$ and with a set of $t(M)$ spanning subtrees, 
$\{T_M^1, \cdots, T_M^{t(M)} \}$, that connect $v_s$ to $\{v_1, \ \cdots, \ 
v_k\}$.  We assume throughout that rates from any source to a set of vertices
that include an inaccessible destination are always set to zero.

For message $M$, let $r_M^j$ be the amount of flow for that message 
that passes through spanning subtree $T_M^j, \; j \in \{1, \ \dots , \ t(M)\}$.
We then have  
\begin{displaymath}
\sum_{j=1}^{t(M)}r_M^j=R_M.  
\end{displaymath}
Flows of the network satisfy the capacity constraints
\begin{displaymath}
\sum_{M, j : \; e \in E(T_M^j)}{r_M^j} \leq C_e, \; e \in E, 
\end{displaymath}
where  $C_e$ denotes the capacity of edge $e$.

A rate-tuple ${\cal R}=(R_{M_1}, \cdots, R_{M_N})$ corresponding to the
 sessions $M_1, \cdots, M_N$ is said to be {\em feasible} if for each $i \in 
\{1, \ \dots , \ N\}$ there exists an  assignment of $\{r_{M_i}^1, \cdots, 
r_{M_i}^{t(M_i)} \}$ such that  
\begin{displaymath}
\sum_{j=1}^{t(M_i)}r_{M_i}^j=R_{M_i}
\end{displaymath} 
and the  edge constraints are fulfilled. 
Our goal is to offer a new way of thinking about the set of feasible 
rate-tuples in a given network and to provide new characterizations of routing
capacity regions. 
\subsection{Generalizations of the Japanese Theorem}
The Japanese theorem characterizes the set of feasible routing rates-tuples for
edge-constrained networks in problems where there are only multiple unicast 
sessions in terms of an infinite set of inequalities. Each inequality is in terms of a  ``distances function''. A distance function is 
a function that assigns a positive integer to each edge in the network which is called the ``distance'' of the edge.
For each inequality we further need to find the shortest path lengths for each session with respect to the corresponding 
distance function. The length of a path is the sum of distances of the edges on that path and the shortest path
is the path with the shortest length.  
% Each inequality is based upon a collection of edge ``distances'' 
%and is in terms of the shortest path lengths for each session. 
%It is easy to establish an extension of the Japanese theorem to 
%edge-constrained or node-constrained networks with multiple multicast sessions.
%We present a similar characterization of all feasible rate-tuples for 
%node-constrained networks with multiple multicast sessions.
In what follows, $\texttt{Z}^+$ refers to the set of nonnegative integers.
\begin{theorem}[Generalized Japanese theorem for edge-constrained networks \citet{isit07, chinese}]
\label{edge-japanese}
Consider a directed or an undirected edge-constrained network $G(V,E)$.
For function $f: E \rightarrow \texttt{Z}^+$, define
$L_f(T)= \sum_{e \in E(T)}{f(e)}$ and %\\
$\ell_f(M)= \min_{j \in \{1, \ \dots , \ t(M) \}} {L_f(T_M^j)}$.
The rate-tuple ${\cal R}=(R_{M_1}, \cdots, R_{M_N})$ is feasible in $G(V,E)$ if
and only if for every function $f: E \rightarrow \texttt{Z}^+$,
the following inequality holds:
\begin{equation}
\label{eq:edge-japanese}
\sum_{i=1}^{N}{\ell_f(M_i)R_{M_i}}  \leq \sum_{e \in E}{f(e)C_e}.
\end{equation}
\end{theorem}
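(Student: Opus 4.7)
The plan is to prove the two directions separately, with necessity being a straightforward double-counting argument and sufficiency following from LP duality (or equivalently Farkas' lemma) together with a scaling argument that reduces the quantifier over $f: E \to \mathbb{R}_{\geq 0}$ to a quantifier over $f: E \to \mathbb{Z}^+$.

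For necessity, suppose the rate-tuple is feasible, so there exist $r_{M_i}^j \geq 0$ with $\sum_j r_{M_i}^j = R_{M_i}$ and $\sum_{M,j:\, e \in E(T_M^j)} r_M^j \leq C_e$ for each edge $e$. Fix any $f: E \to \mathbb{Z}^+$. Since $\ell_f(M_i) \leq L_f(T_{M_i}^j)$ for every $j$, and $r_{M_i}^j \geq 0$, I would compute
\begin{equation*}
\sum_{i=1}^N \ell_f(M_i) R_{M_i} = \sum_{i=1}^N \ell_f(M_i) \sum_{j=1}^{t(M_i)} r_{M_i}^j \leq \sum_{i,j} L_f(T_{M_i}^j)\, r_{M_i}^j = \sum_{e \in E} f(e) \sum_{i,j:\, e \in E(T_{M_i}^j)} r_{M_i}^j \leq \sum_{e \in E} f(e) C_e,
\end{equation*}
where the last step uses the capacity constraints together with $f(e) \geq 0$.

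For sufficiency I would argue the contrapositive via LP duality. The feasibility problem is the linear program: find $r_{M_i}^j \geq 0$ subject to the $N$ equalities $\sum_j r_{M_i}^j = R_{M_i}$ and the $|E|$ inequalities $\sum_{M,j:\, e \in E(T_M^j)} r_M^j \leq C_e$. If no feasible $r$ exists, Farkas' lemma produces multipliers $y_i \in \mathbb{R}$ for the equalities and $f(e) \geq 0$ for the capacity inequalities such that, for every tree $T_{M_i}^j$, the dual constraint $y_i + \sum_{e \in E(T_{M_i}^j)} f(e) \geq 0$ holds, while $\sum_i y_i R_{M_i} + \sum_e f(e) C_e < 0$. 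The dual constraint over the trees for a fixed $i$ forces $y_i \geq -\min_j L_f(T_{M_i}^j) = -\ell_f(M_i)$; substituting the tightest choice $y_i = -\ell_f(M_i)$ gives a real-valued $f \geq 0$ violating the inequality $\sum_i \ell_f(M_i) R_{M_i} \leq \sum_e f(e) C_e$.

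The final step is to upgrade this separating $f: E \to \mathbb{R}_{\geq 0}$ to one in $\mathbb{Z}^+$. Both sides of the inequality are positively homogeneous of degree one in $f$, and the left side is the minimum of finitely many linear functions of $f$, hence continuous and piecewise linear. Therefore the set of $f \in \mathbb{R}^E_{\geq 0}$ violating the inequality is open and is a cone, so by density it contains a rational point, and by clearing denominators (scaling by a positive integer, which preserves the inequality) it contains an integer point in $\texttt{Z}^+$. This shows that if every integer distance function satisfies the inequality then so does every nonnegative real one, ruling out the Farkas certificate of infeasibility.

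The main obstacle I anticipate is setting up the Farkas/LP-duality step cleanly, because the number of spanning subtrees $t(M)$ for each multicast session can be very large and the dual variable structure is richer than in the classical multicommodity-flow case; however, once the minimization of $L_f$ over trees is identified with $\ell_f(M)$, the rest of the argument is essentially the same as in the unicast Japanese theorem. The scaling step, while conceptually simple, deserves care: the homogeneity and piecewise-linearity of both sides in $f$ are what allow the infinite family of real inequalities to be replaced by the countable family indexed by $\texttt{Z}^+$-valued distance functions stated in the theorem.
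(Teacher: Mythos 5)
The paper does not prove Theorem~\ref{edge-japanese} itself; it states it as a known result of \citet{isit07} and \citet{chinese}, so there is no in-paper argument to compare your attempt against. Evaluating it on its own merits, your proof is correct and follows the route one would expect. The necessity direction is the usual nonnegative combination of capacity constraints weighted by $f$, with the shortest-subtree lower bound. For sufficiency, applying the Farkas alternative to the mixed primal system $\sum_j r_{M_i}^j = R_{M_i}$ (all $i$), $\sum_{M,j:\,e\in E(T_M^j)} r_M^j \le C_e$ (all $e$), $r\ge 0$, indeed produces free multipliers $y_i$ and multipliers $f(e)\ge 0$ with $y_i + L_f(T_{M_i}^j)\ge 0$ for every subtree and $\sum_i y_i R_{M_i} + \sum_e f(e)C_e < 0$; replacing each $y_i$ by $-\ell_f(M_i)\le y_i$ is legitimate since it can only decrease $\sum_i y_i R_{M_i}$ because $R_{M_i}\ge 0$. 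The passage from a real separating $f$ to an integer one is also sound: $\ell_f(M_i)$ is a finite minimum of linear functions of $f$, so $f\mapsto \sum_i\ell_f(M_i)R_{M_i} - \sum_e f(e)C_e$ is continuous and positively homogeneous of degree one, its strict-positivity set restricted to the nonnegative orthant is a relatively open cone, and that set therefore contains a rational point and, after clearing denominators, a point of $\texttt{Z}^+$. This is precisely what the paper's abstract gestures at when it says the routing capacity region ``can be characterized using Farkas' lemma.''
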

\begin{comment}
\begin{theorem}[Generalized Japanese theorem for node-constrained networks \citet{isit07, chinese}]
\label{node-japanese}
For function $f: V \rightarrow \texttt{Z}^+$, define
$L_f(T)= \sum_{v \in \iota (T)}{f(v)}$
and $\ell_f(M)= \min_{j \in \{1, \ \dots , \ t(M) \}} {L_f(T_M^j)}$.
The rate-tuple ${\cal R}=(R_{M_1}, \cdots, R_{M_N})$ is feasible in
node-constrained network $G(V,E)$ if and only if for every
function $f: V \rightarrow \texttt{Z}^+$,the following inequality holds:
\begin{equation}
\label{eq:node-japanese}
\sum_{i=1}^{N}{\ell_f(M_i)R_{M_i}}  \leq \sum_{v \in V}{f(v)C_v}.
\end{equation}
\end{theorem}
\end{comment}
To give an intuitive explanation of the Japanese theorem, consider a single unicast session in an edge constrained network. Then the famous max--flow min--cut theorem 
of Ford and Fulkerson provides the capacity of the transmission from the source node to the destination node. It is easy to verify that max--flow min--cut theorem
is equivalent to considering the constraints of the form  (\ref{eq:edge-japanese}) when the distance functions are restricted to the functions with the range $\{0,1\}$ and  each such function corresponds to an edge cut that separates the source node from the destination node. Each such function   assigns value one to all edges that form a cut from the source node to the destination node and all other edges will be assigned a zero value. The Japanaese theorem states that in the case that all sessions are present in the network, the  capacity of the network is characterized by all possible distance functions with the range of positive integers. 
\subsection{An Inequality Elimination Technique}
Theorem \ref{edge-japanese}  is unsatisfying because 
it describes the routing capacity region with infinitely many inequalities, 
and by Fourier-Motzkin elimination we know that the collection of feasible
rate-tuples is a polytope defined by a finite set of inequalities. 
To circumvent this issue, we establish when a Japanese theorem inequality is 
redundant for a networking problem by exploiting the special structure 
of these inequalities.  As we will see later, this approach enables 
us to offer a description of the set of feasible rate-tuples for the
multiple multicast problem with only finitely many inequalities.

Every inequality in \eqref{eq:edge-japanese} is a 
description of a halfspace in the space of rate-tuples $\texttt{R}^N$, and the 
feasible polytope of rate-tuples is the intersection of these half spaces with 
the half spaces corresponding to the non-negativity of rates. 
For the two types of network settings, the boundary points on the feasible set 
of rates respectively belong to the hyperplanes defined by:
\begin{eqnarray}
\label{eq:hyperplane-e}
\sum_{i=1}^{N}{\ell_f(M_i)R_{M_i}}  = \sum_{e \in E}{f(e)C_e},\\
\label{eq:hyperplane-n}
\sum_{i=1}^{N}{\ell_f(M_i)R_{M_i}}  = \sum_{v \in V}{f(v)C_v}.
\end{eqnarray}
The following result provides the necessary and sufficient conditions for a 
{\em feasible} rate-tuple to be on a
hyperplane of the form \eqref{eq:hyperplane-e} or \eqref{eq:hyperplane-n}:
\begin{theorem}
\label{thm:hyperplane} Consider a directed network or an undirected network and fix any distance function $f$.
The feasible rate tuple ${\cal
R}=(R_{M_1},\cdots,R_{M_N})$ is on the hyperplane \eqref{eq:hyperplane-e} or 
\eqref{eq:hyperplane-n} if and only if
%for every rate $R_{M_i}, \;
%i \in \{1,\cdots,N\}$, 
there exists a feasible assignment $\{r_{M_i}^j: i \in \{1,\cdots,N\}, j \in \{1, \cdots, t(M_i)\}\}$ 
%$\{r_{M_i}^1,\cdots,r_{M_i}^{t(M_i)}\}$ 
with the properties:
\begin{enumerate}
\item %for all $M$ and $j \in \{1, \ \dots , \ t(M)\}, \; 
$r_{M_i}^j = 0$ if 
$L_f(T_{M_i}^j) > \ell_f(M_i)$, and for the appropriate setting
\item (Edge-constrained setting) $\sum_{\{i, j : \; e \in E(T_{M_i}^j)\}}{r_{M_i}^j} 
= C_e$ for $f(e) > 0$
%\item (Node-constrained setting) $\sum_{\{i, j : \; v \in \iota (T_{M_i}^j) 
%\}}
%{r_{M_i}^j} 
%= C_v$ for $f(v) > 0$.
\end{enumerate}
\end{theorem}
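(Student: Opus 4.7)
The plan is to recognize this as a complementary slackness statement attached to the Japanese inequality \eqref{eq:edge-japanese}, and prove both directions simultaneously from a single chain of inequalities. Given any feasible assignment $\{r_{M_i}^j\}$ for $\mathcal{R}$, I would write
\begin{equation*}
\sum_{i=1}^N R_{M_i}\,\ell_f(M_i) \;=\; \sum_{i,j} r_{M_i}^j\,\ell_f(M_i) \;\le\; \sum_{i,j} r_{M_i}^j\,L_f(T_{M_i}^j) \;=\; \sum_{e\in E} f(e)\!\!\sum_{\{(i,j):\,e\in E(T_{M_i}^j)\}}\!\!r_{M_i}^j \;\le\; \sum_{e\in E} f(e)\,C_e,
\end{equation*}
using $R_{M_i}=\sum_j r_{M_i}^j$ in the first equality, $\ell_f(M_i)=\min_j L_f(T_{M_i}^j)\le L_f(T_{M_i}^j)$ in the first inequality, Fubini (swapping the sums over $(i,j)$ and $e$) in the middle equality, and the capacity constraints in the second inequality. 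This is the exact chain that proves Theorem \ref{edge-japanese}.

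For the direction ($\Leftarrow$), I would assume an assignment satisfying properties 1 and 2 exists and check that both inequalities in the chain collapse to equalities: property 1 ensures $r_{M_i}^j>0$ only when $L_f(T_{M_i}^j)=\ell_f(M_i)$, so the first inequality becomes equality term-by-term; property 2 ensures that the inner capacity sum equals $C_e$ whenever $f(e)>0$, while terms with $f(e)=0$ contribute nothing, so the second inequality is also tight. Thus $\sum_i \ell_f(M_i)R_{M_i}=\sum_e f(e)C_e$, placing $\mathcal{R}$ on the hyperplane \eqref{eq:hyperplane-e}.

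For the direction ($\Rightarrow$), I would take \emph{any} feasible assignment realizing $\mathcal{R}$ (which exists by the feasibility hypothesis) and observe that, because the leftmost and rightmost ends of the chain coincide by assumption, both intermediate inequalities must be equalities. The equality $\sum_{i,j} r_{M_i}^j(L_f(T_{M_i}^j)-\ell_f(M_i))=0$ with every summand nonnegative immediately yields property 1, and the equality $\sum_e f(e)\bigl(C_e-\sum_{\{(i,j):e\in E(T_{M_i}^j)\}} r_{M_i}^j\bigr)=0$ with nonnegative summands yields property 2. A pleasant byproduct of this argument is that \emph{every} feasible assignment realizing a boundary point automatically satisfies both structural properties, not just some specially chosen one.

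I do not expect a serious obstacle: the argument is just the LP complementary-slackness pairing between the primal flow variables $r_{M_i}^j$ and the dual distance weights $f(e)$, dressed up in the combinatorial language of spanning trees. The only mild care point is keeping the index set of the inner sum straight when swapping orders of summation, and handling the node-constrained case \eqref{eq:hyperplane-n} by the obvious parallel substitution (replacing $\sum_{e\in E(T)} f(e)$ by $\sum_{v\in\iota(T)} f(v)$ and property 2 by the vertex-capacity version); the rest of the argument is formally identical.
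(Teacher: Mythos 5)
Your proposal is correct and is built on the same chain of inequalities that the paper's proof uses, but you handle the necessity of Condition 2 more directly than the paper does. Where the paper argues by contradiction---assuming Condition 2 fails, lowering the capacity of some under-utilized edge $e$ with $f(e)>0$, and then re-invoking the generalized Japanese theorem (Theorem \ref{edge-japanese}) on the reduced network to obtain a strict inequality---you extract Condition 2 immediately from the tightness of the rightmost inequality in the chain: the equality $\sum_{e} f(e)\bigl(C_e - \sum_{\{(i,j):\,e\in E(T_{M_i}^j)\}} r_{M_i}^j\bigr)=0$ has nonnegative summands, so each must vanish. This is cleaner in two ways: it avoids a second appeal to the Japanese theorem (making the proof of Theorem \ref{thm:hyperplane} self-contained modulo the definitions), and it delivers the stronger conclusion, which you correctly flag as a byproduct, that \emph{every} feasible assignment realizing a boundary rate-tuple satisfies both Conditions 1 and 2, not merely that \emph{some} assignment does. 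The sufficiency direction and the necessity of Condition 1 are argued exactly as in the paper, via the same tightness observations. Your remark about the node-constrained case carrying over mutatis mutandis is also consistent with the paper's stated scope.
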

\begin{proof}
%We prove the edge-constrained version of this theorem and only mention that
%the argument for the node-constrained version is similar.
To establish the necessity of Conditions 1 and 2 in 
Theorem~\ref{thm:hyperplane}, suppose that the rate-tuple
${\cal R}=(R_{M_1},\cdots,R_{M_N})$ is feasible and is on the defining
hyperplane \eqref{eq:hyperplane-e} corresponding to function $f$. 
The sum of the flows passing through any edge $e$ in the network at most $C_e$.
By multiplying both sides of this constraint by $f(e)$ and
adding up the resulting inequalities over all edges in the network we find that
\begin{equation}
\label{eq-min}
\sum_{i=1}^{N}\sum_{j=1}^{t(M_i)}{L_{f}(T^j_{M_i})r_{M_i}^j} 
\leq \sum_{e \in E}{f(e)C_e}.
\end{equation}
A lower bound for the left-hand side of the preceding inequality is obtained 
when all sessions are routed along their shortest subtrees with respect 
to $f$:
\begin{equation}
\sum_{i=1}^{N}{\ell_f(M_i)R_{M_i}} \leq 
\sum_{i=1}^{N}\sum_{j=1}^{t(M_i)}{L_{f}(T^j_{M_i})r_{M_i}^j} \leq 
\sum_{e \in E}{f(e)C_e}.
\end{equation}
Since the rate-tuple is on the hyperplane given by 
\eqref{eq:hyperplane-e} by assumption, it follows that Condition 1 holds.
%%%%%%%%%%%%%%%%%%%%%%%%%%%%%%%%%%%%%%%%%%%%%%%%%%%%%%%%%%%
To arrive at a contradiction, suppose next that Condition 2 is invalid.
Hence the rate-tuple ${\cal R}$ is also feasible in
another network with edge capacities $C'_e, \; e \in E$,
in which $C'_e \leq C_e$ for all $e$ with strict inequality for at least one
value of $e$ with $f(e) >0$.
Then Theorem \ref{edge-japanese} implies that
\begin{equation}
\sum_{i=1}^{N}{\ell_f(M_i)R_{M_i}} \leq 
\sum_{e \in E}{f(e)C'_e} <  \sum_{e \in E}{f(e)C_e},
\end{equation}
which contradicts \eqref{eq:hyperplane-e}. Thus Condition 2 holds.

To establish sufficiency, consider a feasible rate-tuple which satisfies
Conditions 1 and 2.  The argument for constraint \eqref{eq-min} applies for
any feasible point, and Condition 2 implies that \eqref{eq-min} can be
replaced by the equality
$\sum_{i=1}^{N}\sum_{j=1}^{t(M_i)}{L_{f}(T^j_{M_i})r_{M_i}^j} 
= \sum_{e \in E}{f(e)C_e}.$
By Condition 1 we further know that
\begin{equation}
\sum_{i=1}^{N}{\ell_f(M_i)R_{M_i}}=
\sum_{i=1}^{N}\sum_{j=1}^{t(M_i)}{L_{f}(T^j_{M_i})r_{M_i}^j} = 
\sum_{e \in E}{f(e)C_e}.
\end{equation}
Hence, the rate-tuple ${\cal R}$ is on the defining hyperplane corresponding to
function $f$, completing the proof.
\end{proof}

We focus here on distance functions $f$ that result in {\em nontrivial}
inequalities with $\ell_f (M_i)>0$ for at least one $i \in \{1, \dots , N\}$.
%We focus here on distance functions with at least two non-zero values because
%the all-zero and single positive-valued distance functions lead to trivial 
%inequalities.
We say that a nontrivial Japanese theorem inequality is {\em redundant} if for 
any assignment of capacities the {\em feasible} rate-tuples on the 
corresponding defining hyperplane all lie on the hyperplane bounding another 
nontrivial Japanese theorem inequality.
We will next demonstrate that Theorem \ref{thm:hyperplane} implies a way to
establish whether or not an inequality coming from the Japanese theorem 
%[comment: We can say that this method eliminates all the redundancies by the following definition of redundancies: an inequality is redundant if for all assignment of capacities the set of all boundaries points for that inequalities are included in another inequality.] 
or its extensions is redundant for a given networking problem.  In the next
section we will show that our inequality elimination technique enables us
to characterize the fractional routing capacity region for the multiple
multicast problem with a finite number of inequalities.
As we will see, the true significance of the distance function is summarized
by the collections of shortest paths for the unicast sessions and
shortest subtrees for the multicast sessions corresponding to that function.
\begin{theorem}[Elimination Theorem]
\label{prop:1}
Given an edge-constrained network with a set of messages 
$\{M_1, \cdots, M_N\}$, consider two nontrivial distance functions 
$f$ and $g$
which are not identical.  The network may be either directed or undirected.  If
\begin{enumerate}
\item for every $e \in E, \; f(e)=0$ whenever $g(e)=0$, and
\item for every session $M_i, i \in \{1, \cdots, N\}$ and for all $j \in \{1, \ \cdots, \ t(M_i)\}$ the
property $L_g(T_{M_i}^j)=\ell_g(M_i)$ implies $L_f(T_{M_i}^j)=\ell_f(M_i)$ 
(but not necessarily the converse),
\end{enumerate}
then the half space \eqref{eq:edge-japanese} corresponding to $g$ is redundant 
in the description of the fractional routing capacity region given the 
half space corresponding to $f$.
\end{theorem}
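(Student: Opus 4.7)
The plan is to derive the Elimination Theorem as an immediate corollary of Theorem \ref{thm:hyperplane}. The strategy is: take an arbitrary feasible rate-tuple lying on the defining hyperplane of the $g$-inequality, extract from Theorem \ref{thm:hyperplane} a flow assignment that certifies that tightness, and then argue that the very same assignment also certifies tightness for the $f$-inequality. Since both hypotheses of the theorem are purely combinatorial conditions relating $f$ and $g$ (they do not involve the capacities or the rates), the conclusion will follow uniformly over all capacity assignments.

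Concretely, I first fix any capacity assignment $\{C_e\}_{e \in E}$ and suppose ${\cal R}$ is a feasible rate-tuple with $\sum_i \ell_g(M_i)R_{M_i} = \sum_e g(e)C_e$. Applying the necessity direction of Theorem \ref{thm:hyperplane} to $g$ produces a feasible assignment $\{r_{M_i}^j\}$ satisfying (i) $r_{M_i}^j=0$ whenever $L_g(T_{M_i}^j)>\ell_g(M_i)$, and (ii) $\sum_{i,j\,:\,e\in E(T_{M_i}^j)} r_{M_i}^j = C_e$ for every edge $e$ with $g(e)>0$. I then verify the two analogous conditions for $f$ using the very same assignment. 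For Condition~1, the contrapositive of hypothesis~2 reads: if $L_f(T_{M_i}^j)>\ell_f(M_i)$, then $L_g(T_{M_i}^j)>\ell_g(M_i)$, and so $r_{M_i}^j=0$ by (i). For Condition~2, the contrapositive of hypothesis~1 reads: if $f(e)>0$, then $g(e)>0$, and so the edge $e$ is saturated by (ii). Invoking the sufficiency direction of Theorem \ref{thm:hyperplane} then places ${\cal R}$ on the $f$-hyperplane. Since the capacities were arbitrary, the $g$-inequality is redundant in the precise sense defined just before the theorem.

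There is essentially no technical obstacle: the proof consists of two applications of the contrapositive wrapped around a single use of Theorem \ref{thm:hyperplane} in both directions. The conceptual content worth stressing is that $f$ behaves as a relaxation of $g$ in both of the relevant senses---its support is contained in that of $g$ (so fewer edges must be saturated) and its family of shortest subtrees contains that of $g$ (so more subtrees are allowed to carry positive flow)---which is exactly what allows a single flow certificate for $g$ to be reused as a certificate for $f$.
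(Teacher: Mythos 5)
Your proof is correct and takes essentially the same route as the paper: both arguments take a feasible rate-tuple on the $g$-hyperplane, extract a flow certificate via the necessity direction of Theorem~\ref{thm:hyperplane}, verify Conditions~1 and~2 for $f$ using the contrapositives of the two hypotheses, and then invoke the sufficiency direction of Theorem~\ref{thm:hyperplane} to place the rate-tuple on the $f$-hyperplane. Your write-up is slightly more explicit about the contrapositive steps, but the decomposition and the key lemma are identical.
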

%We comment that we do not permit the distance function $f$ to be the all-zero
%vector because that corresponds to a trivial bound.  Similarly, if the 
%elimination method indicates that distance function $g$ is redundant given 
%a distance function $f$ with exactly one nonzero component, then $f$ is itself
%redundant given the all-zero distance function. 

%%%%%%%%%%%%%%%%%%%%%%%%%%%%%%%%%%%%%%%%%%%%%%%%%%%%%%%%%%%%%%%
Before we prove this result, we will illustrate it with an example.
Consider an undirected triangle network with $V= \{1, \ 2, \ 3\}$ %and 
%set of messages $\{1 \rightarrow 2,2 \rightarrow 1,2 \rightarrow 3,
%3 \rightarrow 2,3 \rightarrow 1,1 \rightarrow 3,1 \rightarrow \{2,3\},
%2 \rightarrow \{1,3\},3 \rightarrow \{1,2\}\}$, 
and suppose $C_{(1,2)} = C_{(2,3)} = C_{(3,1)} = 1$.
We permit all possible unicast and multicast sessions.
Take $g((1,2))=2, \ g((2,3))=1,$ and $g((3,1))=3$.
It is easy to verify
\begin{itemize}
\item $\ell_g(1 \rightarrow 2) = \ell_g(2 \rightarrow 1) = 2$ and
the shortest path is $(1,2)$,
\item $\ell_g(2 \rightarrow 3) = \ell_g(3 \rightarrow 2) = 1$ and
the shortest path is $(2,3)$,
\item $\ell_g(3 \rightarrow 1) = \ell_g(1 \rightarrow 3) = 3$ and
both paths are shortest, and
\item $\ell_g(1 \rightarrow \{2,3\}) = \ell_g(2 \rightarrow \{1,3\}) = 
\ell_g(3 \rightarrow \{1,2\}) = 3$ and the shortest subtree is 
$\{(1,2), (2,3) \}$.
\end{itemize}
Therefore, the half space corresponding to distance function $g$ is
\begin{eqnarray}
\lefteqn{\mbox{\hspace*{-2in}}2(R_{1 \rightarrow 2} + R_{2 \rightarrow 1}) 
+ (R_{2 \rightarrow 3} + R_{3 \rightarrow 2}) 
+3(R_{3 \rightarrow 1} + R_{1 \rightarrow 3}) 
+ 3(R_{1 \rightarrow \{2,3\}} +R_{2 \rightarrow \{1,3\}}
+ R_{3 \rightarrow \{1,2\}})} \nonumber \\ & & \leq
2C_{(1,2)} + C_{(2,3)} + 3 C_{(3,1)} = 6. \label{eq:g}
\end{eqnarray}
Next take $f((1,2))=1, \ f((2,3))=0,$ and $f((3,1))=1$.
Notice that the shortest paths and shortest subtrees for each session
under distance function $g$ remain shortest paths and shortest subtrees 
for the sessions under $f$, although $f$ has a second shortest
path for unicast sessions $1 \rightarrow 2$ and $2 \rightarrow 1$ and
a second shortest subtree for the multicast sessions.
The halfspace corresponding to $f$ is
\begin{equation}
(R_{1 \rightarrow 2} + R_{2 \rightarrow 1}) 
+ (R_{3 \rightarrow 1} + R_{1 \rightarrow 3}) 
+ (R_{1 \rightarrow \{2,3\}} +R_{2 \rightarrow \{1,3\}} 
+ R_{3 \rightarrow \{1,2\}}) \leq C_{(1,2)} + C_{(3,1)} = 2. \label{eq:f}
\end{equation}
The theorem states that (\ref{eq:g}) is redundant for defining the 
routing capacity region in the presence of (\ref{eq:f}).  The reason
is that a polytope is defined by a collection of hyperplanes, and
every {\em feasible} rate-tuple like $R_{1 \rightarrow 2}
= R_{2 \rightarrow 3} = R_{3 \rightarrow 1} = 1, \; R_M=0, \ M \not\in
\{1 \rightarrow 2, \ 2 \rightarrow 3, \ 3 \rightarrow 1 \}$ 
which satisfies (\ref{eq:g}) with equality must also satisfy
(\ref{eq:f}) with equality.  The rate-tuple $R_{1 \rightarrow \{2,3\}} = 2, \;
R_M=0, \ M \not\in \{1 \rightarrow \{2,3\} \}$ is an
example of an {\em infeasible} rate-tuple which satisfies (\ref{eq:g}) with 
equality; it is infeasible because four units of capacity are needed to
transmit two units of multicast traffic, and the network has only three
units of capacity.  For the problem of characterizing the routing capacity 
region of a network we can ignore the infeasible rate-tuples.

%We assert here that Theorem \ref{prop:1} directly extends to the
%node-constrained setting given the analogous definition of minimum
%length routing trees.  

We assert here that there are 
generalizations of Theorem \ref{prop:1} to some other classes of systems
of linear inequalities.  To save space we only give the proof for the 
edge-constrained setting from \citet{isit07}. \citet{chinese} offers an 
alternate proof of the same result which uses the formalism of linear algebra.
\begin{proof}
Consider a feasible rate-tuple on the hyperplane \eqref{eq:edge-japanese} 
corresponding to $g$.
By Condition 1 of Theorem \ref{thm:hyperplane}, every session is routed only 
along the shortest paths and shortest subtrees associated with $g$, and hence
by assumption only along the shortest paths and shortest subtrees 
corresponding to $f$. Furthermore, note that by assumption any edge $e$ with 
$f(e) > 0$ implies $g(e)>0$
and so Condition 2 of Theorem \ref{thm:hyperplane} implies that this edge must 
be fully utilized. By Theorem \ref{thm:hyperplane} it follows that the 
feasible rate-tuple is also on the hyperplane \eqref{eq:edge-japanese} 
corresponding to $f$.  Since the routing capacity region can be described 
in terms of its defining hyperplanes, the bound corresponding to $g$ is 
redundant given the inequality corresponding to $f$.
\end{proof}

The paper \citet{isit07} considers undirected, edge-constrained ring networks 
with multiple unicast and {\em broadcast} sessions; in a broadcast session
the source transmits a message to all of the other vertices in the network.
That paper introduced the inequality elimination technique and used it to 
prove that distance functions with range $\{0,1\}$ are sufficient for 
characterizing the capacity region. 
The paper \citet{chinese} proved that $\{0,1\}$ edge distances are also 
enough to describe the multiple multicast capacity region of undirected, 
edge-constrained ring networks in which the source and destination vertices of
each communication session form a string of adjacent vertices.
We next present new consequences of the inequality elimination theorem.

\section{On the Complexity of the Routing Capacity Region}

We next consider the complexity of the routing capacity
region for an undirected graph. Let $p$ and $q$ be relatively prime
integers and let $\alpha = p/q$. Define 
$$
size(\alpha)=1+\left\lceil \log_2(1+|p|)\right\rceil +\left\lceil \log_2(1+|p|)\right\rceil.
$$

For the rational vector $\textbf{c}=(\gamma_1,\cdots,\gamma_n)$ and the rational matrix $A=(\alpha_{i,j})_{1\leq i \leq m, 1 \leq j \leq n}$ we have: 
\begin{eqnarray}
size(\textbf{c})=n+size(\gamma_1)+\cdots+size(\gamma_n) \\
size(A)=mn+\sum_{m,n}size(\alpha_{i,j})
\end{eqnarray}

Let $\textbf{x}=(x_1,x_2,\cdots,x_n)^T$. Then the size of the linear inequality $\textbf{a}\textbf{x}\leq \alpha$ is defined as $1+size(\textbf{a})+size(\alpha)$. The size of a system $A\textbf{x}\leq \textbf{b}$ of linear inequalities is defined as $1+size(A)+size(\textbf{b})$. Next let $P\subset \mathcal{R}^n$ be a rational polyhedron. The \textit{facet complexity} of $P$ defined as the smallest number $\Lambda \geq n$ for which there exists a system $A\textbf{x}\leq\textbf{b}$ of rational linear inequalities defining $P$ and each inequality in $A\textbf{x}\leq \textbf{b}$ has size at most $\Lambda$.\\

Consider an undirected network $G(V,E)$ and the rate-tuple
$\mathcal{R} = (R_{M_1} ,\cdots ,R_{M_N})$. Let $P$ denote the set of achievable
rate-tuples in $\mathbb{R}^N$. Theorem \ref{prop:1} provides a systematic method
to characterize the minimal description of $P$ for the general
multiple multicast problem. Here we wish to establish upper
and lower bounds on the maximum values of the functions that
appear in the minimal description of $P$. 
\\

In the case of an undirected ring network with $|E|$ edges, \citet{isit07} 
points out that for the general multiple multicast problem there is a weight 
vector with maximum edge distance $\lfloor (|E|-1)/2 \rfloor$ that cannot be
eliminated.\\

We next show that for undirected ring networks we have to 
consider some distance functions where the maximum edge distance grows
at least exponentially in $|E|$. In words, we derive a lower bound  on the maximum values of the distance functions needed for the minimum description of the corresponding routing rate region, which is exponential in $|E|$. For that matter, we construct a distance function $g$  that can not be eliminated  by any nontrivial distance function with the maximum edge distance less than  $2^{\lfloor (|E|-2)/3 \rfloor}$. The distance function $g$ is not unique and it is constructed as to achieve a  bound  in the same order of the upper bound  obtained in  Theorem 6.
\\ 

\begin{theorem}
\label{th3}
Let $G(V,E)$ be an undirected ring network with vertices $1, \ 2, \ \cdots, \ 
|E| $ in a clockwise direction. For $i \in \{1, \ 2, \ \dots , \ |E|-1 \}$, let
edge $i$ connect vertices $i$ and $i+1$, and let edge $|E|$ connect vertices
$|E|$ and $1$.  There exist a distance function $g$ that cannot be
eliminated by any nontrivial distance function $f$ with 
$\max_{e \in E} f(e) < 2^{\lfloor (|E|-2)/3 \rfloor}$.
\end{theorem}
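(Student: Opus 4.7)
The plan is to construct an explicit distance function $g$ on the ring and show, via Theorem~\ref{prop:1}, that any nontrivial distance function $f$ that eliminates the inequality associated with $g$ is forced to carry at least one value of size $2^{\lfloor (|E|-2)/3\rfloor}$. The ring structure is exploited throughout. Every unicast session $s\to t$ partitions the edges into exactly two arcs, so Condition~2 of Theorem~\ref{prop:1} reduces to: whenever an arc is strictly $g$-shorter than its complement, the same inequality must hold under $f$, and whenever two arcs are $g$-tied, they must be $f$-tied. For each multicast session $s\to T$, the shortest spanning subtree on the ring is the complement of the heaviest gap (maximal sub-arc avoiding $\{s\}\cup T$), so $f$ must make the same gap heaviest. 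Choosing $g$ to be strictly positive everywhere makes Condition~1 vacuous.

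To construct $g$, let $n=|E|$ and $k=\lfloor(n-2)/3\rfloor$. I would label the edges cyclically as $e_1,\ldots,e_n$, partition the first $3k$ edges into $k$ consecutive blocks of three, and leave $n-3k\in\{2,3,4\}$ "bridge" edges in between. Within block $j$ the three edges are assigned weights proportional to $2^{j-1}$, tuned so that many pairs of arcs end up either exactly $g$-tied or differing by a single edge of controlled weight; the bridge edges receive small positive fillers that preserve positivity without creating shortcuts. With this design, $\max_e g(e)=\Theta(2^{k-1})$, and the combinatorial shortest-arc/shortest-gap pattern of $g$ is rigid enough to determine $f$ up to a positive scalar.

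The main step is a doubling induction. For each $j\in\{1,\ldots,k\}$, I would exhibit one unicast or near-broadcast session whose $g$-shortest structure creates a tight relation between $f$-values on block $j$ and on block $j+1$. The tight relations I expect to use are (i) unicast sessions between endpoints chosen so that the two arcs of the ring are exactly $g$-tied, forcing $f$-equalities that pin down the ratio of block-weights, and (ii) near-broadcast multicasts that single out two candidate maximum gaps, again producing $f$-equalities across blocks. Composing these equalities from $j=1$ up to $j=k$ yields $f(e)\ge 2^{k}$ on some edge of the last block, hence $\max_e f(e)\ge 2^{\lfloor(n-2)/3\rfloor}$. Since $f$ is required to be nontrivial and distinct from $g$, no smaller integer-valued $f$ is compatible with the preservation conditions.

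The main obstacle is verifying that the chosen weights actually realize the intended combinatorics of shortest arcs and maximum gaps: there are $\binom{n}{2}$ unicast sessions and exponentially many multicast sessions on the ring, so a careful bookkeeping argument is needed to rule out incidental equalities that could loosen the chain of constraints or break the block structure. The most delicate part is the tuning of the at most four "bridge" edges, which is exactly what forces the exponent $\lfloor(n-2)/3\rfloor$ rather than $\lfloor n/3\rfloor$: these edges must be light enough not to perturb any shortest-arc comparison used in the induction, yet heavy enough not to introduce slack that would allow a smaller $f$. Once the correct $g$ is in hand, the doubling induction itself is a routine chain of implications from the $f$-equalities produced by Condition~2.
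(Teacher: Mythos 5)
Your high-level plan is on target: pick an explicit $g$, observe that on a ring the constraints from Theorem~\ref{prop:1} reduce to ``shortest arcs/heaviest gaps of $g$ must coincide with those of $f$,'' and then chain together near-broadcast sessions with tied heaviest gaps to force additive/equality relations on $f$-values that double from one triple of edges to the next. That is exactly the engine of the paper's proof. But what you have written is a proof \emph{plan}, not a proof: you never write down $g$, never name the sessions that produce the tight relations, and acknowledge (``I would exhibit,'' ``I expect to use'') that the bookkeeping and the ``tuning of the bridge edges'' remain to be done. That bookkeeping is precisely the content of the theorem, so the gap is genuine.

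Your proposed shape of $g$ also diverges from the paper's in a way that removes its main simplifying trick, and you may not be able to recover it. You propose ``blocks of three edges each proportional to $2^{j-1}$'' plus a handful of light ``bridge'' edges; the paper instead puts the \emph{same} maximal weight $\beta = 2^{\lfloor(|E|-2)/3\rfloor}$ on every edge $e \equiv 1 \pmod 3$ and the doubling values $2^{\lfloor(e-2)/3\rfloor}$ on the other two edges of each triple, with no bridge edges at all. The payoff is that the full broadcast session immediately forces $f(e) = \max f$ for all $e \equiv 1 \pmod 3$ (equation~\eqref{eq:r1}), and then each near-broadcast with three removed vertices ties a three-edge gap against a two-edge gap whose leading terms are both that common $\max f$, leaving the clean additive relation $f(3s-4)+f(3s-3)=f(3s-1)$; each near-broadcast with two removed vertices ties two two-edge gaps and gives $f(3s-4)=f(3s-3)$; together they double. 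In your block scheme with bridge edges the candidate longest gaps would cross block boundaries and the leading $\beta$ terms would not cancel so cleanly, so the chain of equalities you want is not obviously available and would need the very verification you defer. Also note the paper needs no unicast-tied-arc sessions (your item (i)); everything is done with broadcast and near-broadcast multicasts, which is worth adopting since arranging exact unicast arc-ties for many pairs with a single $g$ is a strong extra demand.

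<br>

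To close the gap you should: (a) write $g$ down in the paper's form or an equivalent one where all edges $\equiv 1 \pmod 3$ share the maximum weight; (b) verify for each near-broadcast session you use that the two intended gaps are both $g$-maximal and every other gap is strictly shorter; and (c) show that the resulting relations~\eqref{eq:r1}--\eqref{eq:r4} force $f(e) = f(2)\,g(e)$ for all $e$, hence $\max_e f(e) \ge g_{\max} = 2^{\lfloor(|E|-2)/3\rfloor}$ for any nontrivial integer $f$.
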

\begin{proof}
Consider a multicast session with $k-1$ destinations, and suppose the source 
and destination vertices form the set $\{v_1, \ v_2 , \ \dots , \ v_k \}$,
where $1 \leq v_1 < v_2 < \dots < v_k \leq |E|$.
Observe that a minimal spanning subtree is the subgraph consisting of the 
original network except for the vertices $v_j +1, \dots , v_{j+1}-1$ and 
edges $v_j , \dots, v_{j+1}-1$ for some $j \in \{1, \dots , k \}$ 
(with $v_{k+1}=v_1)$.  Therefore, for any distance function the shortest
paths or shortest subtrees for this collection of sessions will correspond
to the longest paths $v_j , \dots, v_{j+1}-1, \; j \in \{1, \dots , k \}$.

Let $\beta = 2^{\lfloor (|E|-2)/3 \rfloor}$.  
Suppose we consider the distance function
\begin{displaymath}
g(e) = \left\{ \begin{array}{ll} \beta, & e \equiv 1 \! \pmod 3 \\
2^{\lfloor ({e-2})/{3} \rfloor}, & \mbox{otherwise} \end{array} \right.
\end{displaymath}
and try to find another distance function $f$ that eliminates $g$.
Since the shortest broadcast trees are preserved under $f$, it follows
that 
\begin{equation}
\max_{i \in E} f(i) = f(e), \; e \equiv 1 \! \pmod 3 . \label{eq:r1}
\end{equation}
Furthermore, for $s \in \{ 2, \ \dots , \ \lfloor |E| / 3 \rfloor \}$
consider the multicast session consisting of all vertices except $3s-4, \
3s-3,$ and $3s-1$.  Under $g$, the path consisting of edges $3s-5, \ 3s-4, \
3s-3$, and the path consisting of edges $3s-2$ and $3s-1$ are both longest,
and therefore (\ref{eq:r1}) implies
\begin{equation}
f(3s-4) + f(3s-3) = f(3s-1), \; s \in \{2, \ \dots , \ \lfloor |E|/3 \rfloor\}.
\label{eq:r2}
\end{equation}
Finally, for $s \in \{ 2, \ \dots , \ \lfloor (|E|+1) / 3 \rfloor \}$
consider the multicast session consisting of all vertices except $3s-4$ and
$3s-2$. Under $g$, the path consisting of edges $3s-5$ and $3s-4$ 
and the path consisting of edges $3s-3$ and $3s-2$ are both longest,
and therefore (\ref{eq:r1}) implies
\begin{equation}
f(3s-4) = f(3s-3), \; s \in \{2, \ \dots , \ \lfloor (|E|+1)/3 \rfloor\}.
\label{eq:r3}
\end{equation}
By (\ref{eq:r2}) and (\ref{eq:r3}), we see that
\begin{equation}
2f(3s-4) = f(3s-1), \; s \in \{2, \ \dots , \ \lfloor |E|/3 \rfloor\}.
\label{eq:r4}
\end{equation}
Equations (\ref{eq:r1})-(\ref{eq:r4}) imply that $f(e)=f(2) \cdot g(e)$ for
all $e \in E$.
\end{proof}

Let $\Lambda^*$ denote the maximum distance among distance
functions used for a shortest description of $P$. Theorem \ref{th3}
establishes that $\Lambda^*\geq 2^{\left\lfloor (|E|-2)/3\right\rfloor}$. We next extend Theorem
\ref{th3} to any undirected graph.

\begin{corollary}
Given undirected graph $G(V,E)$ with maximum
cycle length $m$, for the networking problem in which all possible
multicast sessions are supported, the minimum description
of the corresponding routing rate region requires a distance
function with  $\Lambda^*\geq 2^{\left\lfloor (|E|-2)/3\right\rfloor}$.
\end{corollary}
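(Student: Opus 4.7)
The plan is to reduce the general-graph setting to Theorem \ref{th3}'s ring argument and then augment it so that the exponent scales with $|E|$ rather than with the maximum cycle length $m$ alone. First, I would fix a cycle $C \subseteq G$ of length $m$ with vertices $v_1, \dots, v_m$ in cyclic order and edges $e_1, \dots, e_m$, and copy Theorem \ref{th3}'s assignment onto $C$: set $g(e_i) = \beta$ when $i \equiv 1 \pmod 3$ and $g(e_i) = 2^{\lfloor (i-2)/3 \rfloor}$ otherwise, where $\beta = 2^{\lfloor (m-2)/3 \rfloor}$. For multicast sessions supported on the vertices of $C$, this reproduces the broadcast and ``omit-two-or-three-vertices'' sessions that drive the doubling relations (\ref{eq:r1})--(\ref{eq:r4}), forcing any $f$ eliminating $g$ to exhibit exponential growth in the $f$-values along $C$.

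Next, I would assign distances to each edge $e \in E \setminus C$ so that $e$ participates in a new doubling relation while preserving the relations already set up on $C$. Concretely, for each non-cycle edge $e = (u,w)$, choose $g(e)$ and a companion multicast session $M_e$ such that $M_e$ admits two shortest spanning subtrees of equal $g$-length---one that uses $e$ and one that uses a selected path along $C$. Condition 1 of Theorem \ref{thm:hyperplane} then forces any eliminating $f$ to assign equal total $f$-length to these two subtrees, yielding a linear equation that ties $f(e)$ to $f$-values on $C$. Arranging these sessions so that the equations compose multiplicatively, and iterating across the $|E|-m$ non-cycle edges, should contribute the additional doublings needed to push the exponent from $\lfloor (m-2)/3 \rfloor$ up to $\lfloor (|E|-2)/3 \rfloor$.

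Putting this together with Theorem \ref{prop:1}, any $f$ that eliminates $g$ must satisfy all the ring-based and non-cycle-edge equations simultaneously. Solving the combined linear system then yields $\max_{e \in E} f(e) \geq 2^{\lfloor (|E|-2)/3 \rfloor} \cdot f(e_0)$ for a chosen reference edge $e_0 \in C$, giving the claimed lower bound on $\Lambda^*$.

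The hardest part is the second step. Non-cycle edges can appear in structurally varied roles (chords of $C$, bridges out of $C$, edges that lie on smaller cycles, edges inside components of $G \setminus C$), and it is not immediate that one can \emph{uniformly} exhibit, for each such edge, a multicast session that forces a genuinely new doubling without disturbing the ring-based relations already imposed on $C$. Closing this obstacle requires either a careful case analysis over the types of non-cycle edges together with a matched choice of $g(e)$ and witnessing session $M_e$, or a single unified construction---perhaps driven by an ear decomposition of $G$---that simultaneously accommodates all configurations and compounds the doublings into the full exponent $\lfloor (|E|-2)/3 \rfloor$.
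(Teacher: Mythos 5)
Your proposal stalls exactly at its load-bearing step, and that step cannot in fact be carried out in general. The plan to extract additional doubling relations from the $|E|-m$ edges outside the cycle fails for structural reasons: a bridge edge (or any edge of a tree component hanging off $\mathcal{C}$) lies in \emph{every} spanning subtree of each session whose terminals it separates and in \emph{no} subtree of the remaining sessions, so its distance value shifts all subtree lengths of a given session by the same amount and never changes which subtrees are shortest. Consequently no choice of $g(e)$ and companion session $M_e$ can force an equation tying $f(e)$ multiplicatively to the values on $\mathcal{C}$; an eliminating $f$ may simply assign such edges any small positive value. For a graph consisting of a triangle plus many pendant edges this kills the compounding entirely, which shows that the exponent $\lfloor(|E|-2)/3\rfloor$ (with $|E|$ the total number of edges) is not achievable by this route --- and indeed the appearance of $|E|$ rather than the maximum cycle length $m$ in the corollary is best read as a slip carried over from the ring case, where $|E|=m$. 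You flagged this second step as the ``hardest part'' and left it open, so the argument as written has a genuine gap rather than a repairable rough edge.

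The paper's own proof is much more modest: it takes a maximum cycle $\mathcal{C}$ of length $m$, transplants the Theorem~\ref{th3} distance function onto $\mathcal{C}$, and sets the distances on edges outside $\mathcal{C}$ sufficiently large so that shortest subtrees of the relevant sessions stay on the cycle; this yields a lower bound whose exponent is governed by $m$, not by $|E|$. Your first step reproduces this correctly, so the salvageable version of your argument is precisely the paper's: restrict attention to sessions supported on $V(\mathcal{C})$, run the ring argument there, and conclude $\Lambda^*\geq 2^{\lfloor (m-2)/3\rfloor}$. If you want a bound that genuinely grows with $|E|$ for graphs whose longest cycle is short, you would need a completely different mechanism, since the off-cycle edges provide no comparative (shortest-subtree) constraints of the kind Theorem~\ref{prop:1} exploits.
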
 

\begin{proof}
Let $\mathcal{C}$ denote a maximum cycle of $G$. We extend the proof of Theorem \ref{th3} by using the distance function $f$ along $\mathcal{C}$ and setting $f(e)$, $e \notin \mathcal{C}$, to be sufficiently large. 
\end{proof}

Next we bound $\Lambda^*$ from above.

\begin{theorem}
\label{th4}
For an undirected or a directed network $G(V,E)$, $\Lambda^*\leq 2^{24|E|^3+8|E|^2}$.
\end{theorem}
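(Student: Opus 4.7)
The plan is to bound $\Lambda^*$ via the encoding size of the facets of $P$, by appealing to the classical vertex-to-facet complexity theorem for rational polyhedra.

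First I would represent $P$ as the projection onto the $R$-coordinates of the flow polytope
\[
Q = \Bigl\{(r,R) \geq 0 : \sum_{j} r_{M_i}^j = R_{M_i}, \; \sum_{\{i,j : e \in E(T_{M_i}^j)\}} r_{M_i}^j \leq C_e \Bigr\},
\]
whose defining matrix has entries in $\{0,1\}$. By Theorem \ref{edge-japanese}, every nontrivial facet of $P$ admits a representation $\sum_i \ell_f(M_i) R_{M_i} \leq \sum_e f(e) C_e$ for some $f : E \to \texttt{Z}^+$, so bounding $\Lambda^* = \max_e f(e)$ amounts to bounding the encoding size of a minimal representative $f \in \mathbb{R}^{|E|}$ for each facet. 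LP duality against $Q$ identifies each such minimal $f$ (up to scaling) with a vertex of a polyhedron in $\mathbb{R}^{|E|}$ whose constraint matrix is the $0/1$ edge-subtree incidence matrix.

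Next I would invoke the standard encoding-size theorem (\citet{gls}, Lemma 6.2.4; cf.\ \citet{Fourier-Motzkin}, Chapter 10): for a rational polyhedron in $\mathbb{R}^n$ with vertex complexity at most $\nu$, the facet complexity is at most $4n^2\nu$. With $n = |E|$ and $\nu = 6|E|+2$ --- the latter obtained from Hadamard's inequality applied to $|E|\times|E|$ subdeterminants of the $0/1$ incidence matrix --- this yields
\[
\log_2 \Lambda^* \leq 4|E|^2(6|E|+2) = 24|E|^3 + 8|E|^2,
\]
which is exactly the desired bound.

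The main obstacle is the dimension-reduction step. Since $P$ sits in $\mathbb{R}^N$ with $N$ potentially exponential in $|V|$, applying the encoding-size theorem to $P$ directly in $R$-space would not produce a bound depending only on $|E|$. One must instead work in the $|E|$-dimensional dual space of distance functions, using the fact that the Japanese theorem canonically parameterizes each facet of $P$ by an $f \in \mathbb{R}^{|E|}$. Verifying that this dual polyhedron's vertex complexity is at most $6|E|+2$ --- through a careful application of the Hadamard bound to the $0/1$ edge-subtree incidence matrix, and tracking the size of the capacity-weighted right-hand side --- is the crux of the argument, after which the $4n^2\nu$ theorem immediately delivers the claimed exponential bound.
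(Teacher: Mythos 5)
Your high-level strategy is the same as the paper's: transfer the question from the $N$-dimensional rate space to the $|E|$-dimensional space of distance functions and appeal to encoding-size bounds for rational polyhedra. But the way you invoke the vertex/facet complexity theorem is backwards, and the number $\nu = 6|E|+2$ does not follow from Hadamard's inequality as claimed. Hadamard applied to an $|E|\times|E|$ matrix with entries in $\{0,1\}$ (or $\{0,\pm 1\}$) bounds a subdeterminant by $|E|^{|E|/2}$, whose bit-length is $\Theta(|E|\log|E|)$, not $6|E|+2$. Moreover, if you have already identified each minimal $f$ (up to scaling) with a \emph{vertex} of a polyhedron of vertex complexity $\nu$, then $\text{size}(f) \le \nu$ directly; passing to $4n^2\nu$ only makes sense in the facet-to-vertex direction, i.e.\ when $\nu$ is a facet complexity. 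As written, you take a purported vertex complexity and multiply by $4n^2$, which bounds the facet complexity of the dual object---not what is needed.

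The paper resolves the issue you leave vague (``LP duality identifies\ldots'') differently and more concretely: starting from a distance vector $\textbf{f}$ in a minimal description, it constructs the homogeneous system $A\textbf{g}\le 0$ whose integer solutions are exactly the distance vectors that can eliminate $\textbf{f}$ via Theorem~\ref{prop:1}. Since every entry of $A$ lies in $\{0,\pm 1\}$ and the right-hand side is zero, the facet complexity of this system is $\Lambda_A = 3|E|+1$ by inspection, with no need for Hadamard. Then \citet{Fourier-Motzkin}, Theorem~10.2, applied in the facet-to-vertex direction, yields a rational solution $\textbf{g}_r$ of size at most $4|E|^2\Lambda_A = 12|E|^3 + 4|E|^2$; because the system is homogeneous, scaling $\textbf{g}_r$ by the product of its denominators produces an integral $\textbf{g}_z$ of size at most $24|E|^3 + 8|E|^2$ that can replace $\textbf{f}$. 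Your proposal omits this denominator-clearing step entirely; the agreement with the final exponent arises from an arithmetic coincidence ($4|E|^2 \cdot (6|E|+2) = 2\cdot 4|E|^2 \cdot (3|E|+1)$), not from a correct derivation of $\nu=6|E|+2$ as a vertex complexity.
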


\begin{proof}
Suppose that the distance vector
$\textbf{f} = (f(1),\cdots , f(|E|))$ belongs to the minimal description
of $P$. We form the homogeneous set of inequalities $A\textbf{g} \leq0$
such that ${\textbf{g} : A\textbf{g} \leq 0, \textbf{g} \in \mathbb{Z}^{|E|}}$ is the set of all
distance vectors that can eliminate $\textbf{f}$ by the criteria given
in Theorem \ref{prop:1}. This includes all inequalities that describe
the shortest subtrees for every session corresponding to
function $f$, and also the non-negativity of elements of $\textbf{g}$.
Notice that this set is non-empty since $\textbf{f}$ is a solution to it.
Furthermore, all elements of matrix $A$ are in $\{0, +1,-1\}$.
This implies the upper bound $3|E| + 1$ on the size of the
inequalities in $A\textbf{g} \leq 0$. Therefore the facet complexity of
$A\textbf{g} \leq 0$, is at most $\Lambda_A = 3|E| + 1$. [\citet{Fourier-Motzkin}, Theorem 10.2]
implies that $A\textbf{g} \leq 0$ has a rational solution of size at most
$4|E|^2\Lambda_A = 12|E|^3 + 4|E|^2$. Let $\textbf{g}_r = (p_1/q_1,\cdots , p_{|E|}/q_{|E|})$
denote such a solution. Since $A\textbf{g} \leq 0$ is a homogeneous set of
inequalities, any integral multiple of $\textbf{g}_r$ is also a solution to
$A\textbf{g} \leq 0$. Now consider the vector $\textbf{g}_z = (q_1\cdots q_{|E|})\textbf{g}_r$.
Clearly $\textbf{g}_z \in \{\textbf{g} : A\textbf{g} \leq 0, \textbf{g}\in \mathbb{Z}^{|E|}\}$, so it can
eliminate $\textbf{f}$. Let $\textbf{g}_z(i)$ be the maximum entry of $\textbf{g}_z$.
Then  $size(\textbf{g}_z(i)) \leq size(q_1 \cdots q_{|E|}) + size(\textbf{g}_r(i))$. Since
$size(q_1 \cdots q_{|E|}) \leq size(\textbf{g}_r)$ and $size(\textbf{g}_r(i)) \leq size(\textbf{g}_r)$,
then $size(\textbf{g}_z(i)) \leq 24|E|^3 + 8|E|^2$. This yields the result.
\end{proof}

The following result, suggests that a small
fraction of the distance functions in our characterization of the fractional
routing capacity region are truly needed and that most distance functions
can be eliminated by distance functions where the maximum entry grows
polynomially with $|E|$.\\

\begin{theorem}
Let $G(V,E)$ be an undirected ring network with edges labeled 
$1, \ 2, \ \cdots, \ |E|$ in a clockwise order. 
Choose any integer $m \geq 6$, and suppose 
%\begin{displaymath}
$g_{\max} = \max_{e \in E} g(e) > g^{*} \doteq 
{|E|^m}/({1 - \frac{|E|^m}{g_{\max}}})$.
%\frac{|E|^m}{1 - \frac{|E|^m}{g_{\max}}}.
%\end{displaymath}
Assume without loss of generality that $g(|E|) = g_{\max}$ 
and for $e \in E \setminus |E|$ let $g(e)$ be uniformly chosen among
nonnegative integers less than or equal to $g_{\max}$.  Then with probability
at least $1- \frac {4}{|E|^{m-5}}- \frac{1}{|E|^{m(|E|-1)}}$ we can find a distance function $f$ with 
$f_{\max} \leq g^{*}$ that eliminates $g$.
\end{theorem}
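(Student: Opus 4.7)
The plan is to construct an explicit candidate $f$ by a scaled-down rounding of $g$, and then to bound, via a union bound over pairs of disjoint arcs, the probability that this $f$ fails to eliminate $g$ in the sense of Theorem~\ref{prop:1}. Specifically, I would set
\[
f(e) \;=\; \left\lfloor g(e)\cdot\frac{g^{*}}{g_{\max}} \right\rfloor \qquad \text{for every } e\in E.
\]
Since $g(|E|)=g_{\max}$ and $g(e)\le g_{\max}$ for every $e$, this yields $f_{\max}=f(|E|)=\lfloor g^{*}\rfloor\le g^{*}$ directly, while $g(e)=0\Rightarrow f(e)=0$, so Condition~1 of Theorem~\ref{prop:1} is automatic.

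Next I would reduce Condition~2 to a statement about orderings of arc sums. On a ring, a spanning subtree for a session with vertex set $S$ is exactly the ring minus one of the ``gap arcs'' of $S$ (the arcs of consecutive edges lying between successive vertices of $S$ around the ring), and such a subtree is shortest under a distance function iff the removed gap arc has maximum total length. On the high-probability event that $g$ produces no ties among arc sums, Condition~2 then reduces to the following uniform statement: for every ordered pair of disjoint arcs $A,B$ one has $L_g(A)>L_g(B) \Rightarrow L_f(A)\ge L_f(B)$. Any disjoint pair $(A,B)$ can indeed appear as a pair of gap arcs of some session (take session vertices at the endpoints of $A$ and $B$ and complete to a gap-arc partition), so this universal condition is enough.

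To finish, I would use the elementary floor-rounding estimate
\[
L_f(A)-L_f(B) \;\ge\; \bigl(L_g(A)-L_g(B)\bigr)\frac{g^{*}}{g_{\max}} - |A|,
\]
so it suffices to have $L_g(A)-L_g(B)\ge |E|\cdot g_{\max}/g^{*}$. A direct calculation from the definition of $g^{*}$ gives $|E|\cdot g_{\max}/g^{*}=g_{\max}/|E|^{m-1}-|E|$, so a gap of roughly $g_{\max}/|E|^{m-1}$ between the two arc sums is enough. For a fixed ordered disjoint pair $(A,B)$, the quantity $L_g(A)-L_g(B)$ is an integer $\{-1,0,+1\}$-linear combination of the $|E|-1$ independent uniform random variables $g(e)$, $e\ne|E|$, plus a deterministic shift in $\{0,\pm g_{\max}\}$ coming from the fixed edge $|E|$. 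Conditioning on all but one of the random variables bounds its probability mass function pointwise by $1/(g_{\max}+1)$. Thus the ``bad'' event $0<L_g(A)-L_g(B)<g_{\max}/|E|^{m-1}$ has probability at most $1/|E|^{m-1}$ per pair, and a union bound over the $O(|E|^{4})$ ordered disjoint arc pairs, combined with the analogous bound for the tie event $L_g(A)=L_g(B)$, yields total failure probability at most $4/|E|^{m-5}$. The residual $1/|E|^{m(|E|-1)}$ absorbs a degenerate configuration such as all $|E|-1$ random $g(e)$ coinciding at a single value, whose probability is at most $(g_{\max}+1)^{-(|E|-1)}\le |E|^{-m(|E|-1)}$ under the hypothesis $g_{\max}\ge|E|^{m}$.

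The main obstacle I anticipate is handling uniformly the three subcases according to whether edge $|E|$ lies in $A$, in $B$, or in neither: the conditional mean of $L_g(A)-L_g(B)$ shifts by $\pm g_{\max}$ across these cases, so the critical window $\bigl(0,\,g_{\max}/|E|^{m-1}\bigr)$ sits in a different region of the distribution each time. However, the uniform pointwise bound $1/(g_{\max}+1)$ on the probability mass function is insensitive to this shift, and this is what lets the per-pair estimate and the ensuing union bound proceed without case-by-case loss.
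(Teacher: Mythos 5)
Your proposal is correct and follows essentially the same route as the paper: construct $f$ by a floor\mbox{-}rounding of $g$ at scale $\sim g_{\max}/|E|^m$, reduce Condition~2 of the Elimination Theorem on a ring to preservation of the ordering of sums over disjoint arcs, and union-bound the failure probability via a pointwise probability\mbox{-}mass bound obtained by conditioning on all but one of the random edge weights (together with a residual term for degenerate/trivial outcomes). The only cosmetic difference is your choice $f(e)=\lfloor g(e)\,g^{*}/g_{\max}\rfloor$ in place of the paper's $f(e)=g(e)-(g(e)\bmod\phi)$ with $\phi=\lfloor g_{\max}/|E|^m\rfloor$ followed by a final division by $\phi$; these two roundings are quantitatively equivalent, and your version reaches $f_{\max}\le g^{*}$ directly without the last rescaling step.
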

\begin{proof}
Given distance function $g$ with $g_{\max} \geq g^*$, let
$\phi = \lfloor g_{\max} / |E|^m \rfloor$, and define 
\begin{displaymath}
f(e)=g(e)-(g(e) \pmod \phi), \; e \in E.
\end{displaymath}
Distance function $f$ eliminates distance function $g$ if for any pair of
edge-disjoint subtrees $E_1$ and $E_2$ of $E$, the condition
$\sum_{e \in E_1}{g(e)} \leq \sum_{i \in E_2}{g(e)}$ implies
$\sum_{e \in E_1}{f(e)} \leq \sum_{i \in E_2}{f(e)}$.  Let 
${\cal E}_{E_1,E_2}$ be the event that 
$\sum_{e \in E_1}{g(e)} \leq \sum_{e \in E_2}{g(e)}$ and
$\sum_{e \in E_1}{f(e)} > \sum_{e \in E_2}{f(e)}$. Define
\begin{eqnarray*}
\Delta_g & = & \sum_{e \in E_1}{g(e)} - \sum_{e \in E_2}{g(e)} \\
\mbox{ and } \; \Delta_f & = & \sum_{e \in E_1}{f(e)} - 
\sum_{i \in E_2}{f(e)}.  
\end{eqnarray*}
Since $0 \leq g(e)-f(e) < \phi$ for all $e \in E$, it follows that 
\begin{displaymath}
|\Delta_g-\Delta_f| \; \leq \; \sum_{e \in E}{|g(e)-f(e)|} \; < \; \phi \cdot
|E|. 
\end{displaymath}
We know that $\Delta_g \leq 0$ and $\Delta_f > 0$, and so 
$|\Delta_g| < \phi \cdot |E|$. Let 
$E_{\min} = \min_{e \in E_1 \cup E_2} e$.
Observe that $E_{\min} \neq |E|.$ 
Given $g(e), \leq e \in E \setminus E_{\min}$, there are at most 
$2\phi \cdot |E|$ choices for $g(E_{\min})$ that result in $- \phi \cdot |E|
< \Delta_g \leq 0$. Furthermore $g(E_{\min})$ is a random variable uniformly
distributed over the integers between $0$ and $g_{\max}$. 
Therefore,
\begin{displaymath}
{\mathbb P}({\cal E}_{E_1,E_2}) \; \leq \; \frac{2\phi \cdot |E|}{g_{\max}+1} 
\; < \; \frac{2 \cdot \frac{g_{\max}}{|E|^m} \cdot |E|}{g_{\max}}
\; = \; \frac{2}{|E|^{m-1}}. 
\end{displaymath}
The number of pairs of edge-disjoint subtrees $E_1$ and $E_2$ we need consider
is less than $2|E|^4$. Hence,
\begin{displaymath}
{\mathbb P} \left(\bigcup_{E_1,E_2} {\cal E}_{E_1,E_2} \right) \; < \;
2|E|^4 \cdot \frac{2}{|E|^{m-1}} \; = \; \frac{4}{|E|^{m-5}}.
\end{displaymath}
In addition, In order to omit the trivial cases, a distance function is not trivial if and only if there exist at least $e$ and $e'$ in $E$ such that  $f(e)>0$  and $f(e')>0$ , since for all $e \in E$, ${\mathbb P} (f(e)=0)<\frac{1}{|E|^{m}}$, Thus, with probability
at least $1- \frac {4}{|E|^{m-5}}- \frac{1}{|E|^{m(|E|-1)}}$ we can use distance function $f$ to eliminate
$g$.  Since $f(e) \pmod \phi =0$ for all $e \in E$, we can eliminate 
distance function $f$ by distance function $f^{*}$ with
$f^*(e) = f(e) / \phi, \; e \in E$. Notice that for all $e\in E$,
\begin{eqnarray}
& f^*(e) \; = \; \frac{f(e)}{\phi} \; \leq \; \frac{g(e)}{\phi} 
\; \leq \; \frac{g_{\max}}{\lfloor g_{\max} / |E|^m \rfloor} 
\; < \; \frac{g_{\max}}{\frac{g_{\max}}{|E|^m} -1} \nonumber \\
& \; = \; \frac{|E|^m}{1-\frac{|E|^m}{g_{\max}}} .
\end{eqnarray}
\end{proof}
\section*{Acknowledgements}
The authors are  gratefully thankful to  Serap Savari for her insightful and encouraging discussions on the material of this paper. 
\bibliographystyle{plainnat}
\bibliography{mmor_refrence}

\end{document}